\title[Restricting the Splitting Types of a Positive Density Set of Places]{Restricting the Splitting Types of a Positive Density Set of Places in Number Field Extensions}
\author{Brandon Alberts\vspace{-1cm}}
\begin{document}
\sloppy
\maketitle


\begin{abstract}
We prove necessary and sufficient conditions for a finite group $G$ with an ordering of $G$-extensions to satisfy the following property: for every positive density set of places $A$ of a number field $K$ and every splitting type given by a conjugacy class $c$ in $G$, $0\%$ of $G$-extensions avoid this splitting type for each $p\in A$.
\end{abstract}


\section{Introduction}\label{sec:introduction}

Let $K$ be a number field, $G\subseteq S_n$ a transitive group, and $\mathcal{F}(K,G)$ the set of degree $n$ extensions $F/K$ with $\Gal(\widetilde{F}/K) \cong G$. There has been significant study into the proportion of $F\in \mathcal{F}(K,G)$ that satisfy certain local conditions. That is, if $\Sigma_p$ is a set of isomorphism classes of $G$-\'etale algebras over $K_p$ for each place $p$ of $K$, what proportion of the elements $F\in \mathcal{F}(K,G)$ satisfy $F\otimes K_p\in \Sigma_p$ for each $p$? More explicitly, determining the value of
\begin{align}\label{eq:proportion}
\lim_{X\to \infty} \frac{\#\{F \in \mathcal{F}(K,G): \forall p,F\otimes K_p \in \Sigma_p,\ {\rm Ht}(F) \le X\}}{\#\{F \in \mathcal{F}(K,G): {\rm Ht}(F) \le X\}},
\end{align}
where ${\rm Ht}:\mathcal{F}(K,G)\to \R^+$ is a height function satisfying the Northcott property: finitely many fields have bounded height. Most often ${\rm Ht}$ is taken to be the discriminant, the product of ramified primes, or another multiplicative counting function as defined by Wood \cite{wood2009}.

We give a partial answer to this question when $\Sigma = (\Sigma_p)$ restricts the splitting type at a positive proportion of places.  Such a $\Sigma$ is called \textbf{nonadmissible} by the author in \cite{alberts2023}, due to the divergence of a certain corresponding Dirichlet series. We measure the nonadmisibility of $\Sigma$ with the following (lower) density:
\[
\delta_{\rm NA}(\Sigma) = \liminf_{x\to \infty}\frac{\#\{|p|\le x : \exists \text{ an unramified }G\text{-\'etale algebra }F_p/K_p\text{ s.t. } F_p\not\in\Sigma_p\}}{\pi_K(x)},
\]
where $|p| := {\rm Nm}_{K/\Q}(p)$ is the norm down to $\Q$ of the prime, and $\pi_K(x) = \#\{p \text{ place of }K: |p|\le x\}$.

\begin{theorem}\label{thm:main}
Let $K$ be a number field, $G\subset S_n$ a transitive group, $\mathcal{F}(K,G)$ the set of degree $n$ extensions $F/K$ with $\Gal(\widetilde{F}/K) \cong G$, and ${\rm Ht}:\mathcal{F}(K,G) \to \R^+$ a height function. Suppose the inverse Galois problem has a positive solution for $G$, that is $\mathcal{F}(K,G) \ne \emptyset$. Then the following are equivalent:
\begin{enumerate}
\item[(i)] For every family of local conditions $\Sigma = (\Sigma_p)$ with $\delta_{\rm NA}(\Sigma) > 0$,
\[
\lim_{X\to \infty} \frac{\#\{F \in \mathcal{F}(K,G): \forall p,F\otimes K_p \in \Sigma_p,\ {\rm Ht}(F) \le X\}}{\#\{F \in \mathcal{F}(K,G): {\rm Ht}(F) \le X\}} = 0.
\]
\item[(ii)] ${\rm Ht}$ admits no accumulating subfields.
\end{enumerate}
\end{theorem}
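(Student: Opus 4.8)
The plan is to prove the equivalence by establishing each implication separately: the contrapositive of (i)$\Rightarrow$(ii) by constructing, from an accumulating subfield, a nonadmissible $\Sigma$ that a positive-density subfamily satisfies, and (ii)$\Rightarrow$(i) by reducing an infinite family of local restrictions to finitely many and invoking a product formula for local densities. The conceptual picture guiding both directions is that an accumulating subfield correlates the Frobenius classes of a positive-proportion subfamily across \emph{all} places through a single finite quotient, whereas the absence of accumulating subfields forces the splitting types at distinct places to behave independently, so that a positive-density set of local exclusions acts like an infinite product of local densities each bounded away from $1$, and collapses to $0$.

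For the contrapositive of (i)$\Rightarrow$(ii), suppose ${\rm Ht}$ admits an accumulating subfield; passing to its Galois closure I may assume a fixed Galois extension $K\subsetneq L$ with $Q=\Gal(L/K)$, $|Q|\ge 2$, such that a positive proportion of $F\in\mathcal{F}(K,G)$ satisfy $L\subseteq\widetilde{F}$. Since only finitely many surjections realize this accumulation, I fix one surjection $\pi\colon G\twoheadrightarrow Q$ so that a positive proportion of $F$ have $\ker\pi$ cutting out $L$ inside $\widetilde{F}$. I then set $\Sigma_p$ to be those $G$-\'etale algebras $F_p$ over $K_p$ whose pushforward $\pi_*(F_p)$ (a $Q$-\'etale algebra) is isomorphic to $L\otimes_K K_p$. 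Every $F$ in the fixed accumulating family satisfies $F\otimes K_p\in\Sigma_p$ for all $p$, since pushforward commutes with completion. On the other hand, because $\ker\pi\ne 1$ (as $L\subsetneq\widetilde{F}$ for all but finitely many such $F$), at every place $p$ unramified in $L$ there is a conjugacy class $c$ with $\pi(c)\ne{\rm Frob}_p(L/K)$, and the corresponding unramified \'etale algebra is excluded; hence $\delta_{\rm NA}(\Sigma)=1>0$. A positive proportion of $F$ thus satisfy $\Sigma$, contradicting (i).

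For (ii)$\Rightarrow$(i), fix $\Sigma$ with $\delta_{\rm NA}(\Sigma)>0$, let $A$ be the positive-density set of places at which some unramified $G$-\'etale algebra is excluded, and for each $p\in A$ choose an excluded unramified splitting type $c_p$. Writing $N_\Sigma(X)$ and $N(X)$ for the numerator and denominator of the ratio in (i), any $F$ counted by $N_\Sigma(X)$ must avoid $c_p$ at every $p\in A$, so for each finite $A'\subseteq A$ we have $N_\Sigma(X)\le\#\{F:{\rm Ht}(F)\le X,\ {\rm Frob}_p(F)\ne c_p\ \forall p\in A'\}$. Under (ii), the proportion satisfying this finite local condition converges as $X\to\infty$ to $\prod_{p\in A'}(1-\rho_p)$, where $\rho_p$ is the local density of the single type $c_p$. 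Since $c_p$ is \emph{unramified}, the local factor of ${\rm Ht}$ there is trivial, giving a uniform bound $\rho_p\ge\kappa$ for some $\kappa=\kappa(G,{\rm Ht})>0$, whence $\prod_{p\in A'}(1-\rho_p)\le(1-\kappa)^{|A'|}$. Letting $X\to\infty$ first and then $|A'|\to\infty$ (possible as $A$ is infinite) yields $\limsup_{X}N_\Sigma(X)/N(X)\le\inf_{A'}(1-\kappa)^{|A'|}=0$, which is (i). Equivalently, one may apply Chebyshev to $S_y(F)=\#\{p\in A:|p|\le y,\ {\rm Frob}_p(F)=c_p\}$, whose first moment tends to $\sum_{p\in A,|p|\le y}\rho_p\to\infty$ while, by the product formula at two places, the variance is of smaller order, forcing the proportion with $S_y=0$ to vanish.

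The main obstacle is the input used in the second implication: that under hypothesis (ii) the proportion of $F$ satisfying any \emph{finite} family of local conditions converges to the product of the corresponding local densities, with error terms uniform enough to survive interchanging the limits in $X$ and in $A'$. This independence at finitely many places is exactly what an accumulating subfield destroys---a positive-density subfamily sharing a common $L$ ties ${\rm Frob}_p$ to ${\rm Frob}_q$ through $\Gal(L/K)$---so (ii) must be invoked precisely here, and I expect to derive the product formula from the paper's earlier analysis of ${\rm Ht}$, using $\mathcal{F}(K,G)\ne\emptyset$ (via solvability of the associated embedding problems) to guarantee that fields with the prescribed local behavior actually exist. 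The remaining points---the uniform lower bound $\rho_p\ge\kappa$ coming from unramifiedness, and the negligibility of the finitely many ramified primes---are routine.
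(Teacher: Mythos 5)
Your contrapositive of (i)$\Rightarrow$(ii) is essentially the paper's argument: from the accumulating subfield $L$ one takes $\Sigma_p = \{F_p/K_p : L\otimes K_p\subseteq \widetilde{F}_p\}$, checks nonadmissibility using the surjection $G\to\Gal(L/K)$ being nontrivial (the paper only records $\delta_{\rm NA}(\Sigma)>0$ via the non-split primes, but your stronger claim that every place unramified in $L$ excludes some unramified algebra is also correct), and observes that the accumulating family satisfies $\Sigma$, so the limsup of the ratio is positive.

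The direction (ii)$\Rightarrow$(i) has a genuine gap. Your argument rests on the claim that, in the absence of accumulating subfields, the proportion of $F\in\mathcal{F}(K,G)$ satisfying a finite family of local conditions converges to a product of local densities $\prod_{p\in A'}(1-\rho_p)$ with the $\rho_p$ uniformly bounded below. No such product formula is proved in the paper, and none is available in general: as the paper's history section emphasizes, equidistribution of Frobenius among $G$-extensions ordered by ${\rm Ht}$ is known only for special pairs $(G,{\rm Ht})$ (abelian groups, $S_3$, $S_4$, $S_5$, $D_4$, \ldots), and the entire point of the theorem is that it requires no knowledge of the asymptotics of $\#\{F : {\rm Ht}(F)\le X\}$. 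Hypothesis (ii) is far weaker than local equidistribution and does not yield your product formula; your fallback second-moment (Chebyshev) argument needs the same unavailable input at pairs of places. The paper's actual mechanism is different and unconditional: Lemma \ref{lem:independence} shows, by applying Chebotarev to the compositum $\widetilde{F}_1\cdots\widetilde{F}_m$ together with a pigeonhole choice of a conjugacy class $c$ whose splitting type is excluded on a set of places of positive density, that any collection of pairwise independent $G$-extensions all satisfying a nonadmissible $\Sigma$ has size bounded in terms of $|G|$ and $\delta_{\rm NA}(\Sigma)$. Taking a maximal such collection, every further $F$ satisfying $\Sigma$ has $\widetilde{F}$ meeting the fixed finite compositum in a field $L\ne K$, so the numerator is dominated by a finite sum of counts $\#\{F : \widetilde{F}^H=L,\ {\rm Ht}(F)\le X\}$, and (ii) says each of these is $0\%$ of the family. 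This replaces your infinite product of local densities with a finite union of accumulating-subfield counts, which is exactly where (ii) enters; without this idea your proof of (ii)$\Rightarrow$(i) does not go through.
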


We say that ${\rm Ht}$ \textbf{admits an accumulating subfield} if there exists a proper normal subgroup $H\normal G$ and a $G/H$-extension $L/K$ such that
\[
\limsup_{X\to\infty}\frac{\#\{F\in \mathcal{F}(K,G) : \widetilde{F}^H = L,\ {\rm Ht}(F) \le X\}}{\#\{F \in \mathcal{F}(K,G) : {\rm Ht}(F) \le X\}} > 0.
\]
In this case, the field $L$ occurs as a subfield of the Galois closure $\widetilde{F}$ for a positive proportion of extensions $F\in \mathcal{F}(K,G)$. Importantly, $L$ is a subfield of the Galois closure but not necessarily of $F$.

\subsection*{Acknowledgements} The author thanks the anonymous referees for helpful suggestions.

\subsection{History}

Previous work on the asymptotic proportion of $G$-extensions satisfying certain local conditions has been primarily restricted to those $G$ for which the asymptotic growth rate of $\#\{F\in \mathcal{F}(K,G) : {\rm Ht}(F) \le X\}$ is known as $X$ tends to infinity. When this growth rate is understood well enough, the techniques can be extended to evaluate the asymptotic rate of growth of the numerator and denominator of (\ref{eq:proportion}) separately.

Such number field counting techniques have been used to compute the limit (\ref{eq:proportion}) in the following cases:
\begin{itemize}
\item $G$ abelian, ${\rm Ht}$ a multiplicative counting function, and $\Sigma$ is such that, for all but finitely many places $p$, $\Sigma_p$ contains all $G$-\'etale algebras over $K_p$ \cite{wood2009}.
\item $G=S_n$ for $n=3,4,5$, ${\rm Ht} = \disc$, and $\Sigma$ is such that, for all but finitely many places $p$, $\Sigma_p$ contains all $G$-\'etale algebras with squarefree discriminant over $K_p$ \cite{bhargava-shankar-wang2015}.
\item $G=S_3$, ${\rm Ht}$ a multiplicative counting function, and $\Sigma$ is such that, for all but finitely many places $p$, $\Sigma_p$ contains all $G$-\'etale algebras over $K_p$ \cite{shankar-thorne2022}.
\item $G = D_4$, ${\rm Ht}$ the conductor, and $\Sigma$ is such that, for all places $p$, $\Sigma_p$ contains all $G$-\'etale algebras of squarefree conductor (with an extra condition at $p=2$) \cite{altug-shankar-varma-wilson2021}.
\end{itemize}
Moreover, under extra hypotheses in the above cases the limit (\ref{eq:proportion}) is proven to be multiplicative of the form
\[
\left(\sum_{(F_p) \in \bigoplus_{p\mid 2} \Sigma_p} \sigma_2((F_p)_{p\mid 2})\right)\prod_{p\nmid 2} \left(\sum_{F_p\in \Sigma_p} \sigma_p(F_p)\right)
\]
for explicit functions $\sigma_p:\Sigma_p \to \R_{\ge 0}$. These extra assumptions require $\{F\in \mathcal{F}(K,G) : \forall p,\ F\otimes K_p\in \Sigma_p\}\ne \emptyset$, ${\rm Ht}$ is a so-called fair counting function if $G$ is abelian, and $G$ is generated by minimal weight elements of ${\rm Ht}$ if $G=S_3$. The distinct behavior at primes above $2$ accounts for certain local conditions that are not realizable by global extensions, called inviable by Wood \cite{wood2009}. When $G$ and ${\rm Ht}$ satisfy these hypotheses, the corresponding cases of Theorem \ref{thm:main} follow by bounding $\Sigma$ above by larger families that, for all but finitely many places, contains all $G$-\'etale algebras of $K_p$.

The primary strength of Theorem \ref{thm:main} is that it applies to all groups $G$ and all heights ${\rm Ht}$, regardless of what is known about the asymptotic growth rate of $\#\{F\in \mathcal{F}(K,G) : {\rm Ht}(F) \le X\}$. It is better compared to theorems that determine number fields by their split primes, for example the following result due to Bauer:

\begin{proposition}[Bauer {\cite[Proposition 13.9]{neukirch2011}}]
If $L/K$ is Galois and $M/K$ is a finite extension, then
\[
\{p\text{ split in }L/K\} \supseteq \{p\text{ lying under at least one degree $1$ place $\beta$ of }M\}
\]
if and only if $L\subseteq M$.
\end{proposition}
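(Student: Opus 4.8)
The plan is to translate both place-sets into conditions on Frobenius elements inside a single Galois group and then read off the equivalence from the Chebotarev density theorem and the Galois correspondence. Since a ramified place can never split completely, while it may still lie under a degree-$1$ place, I would interpret the claimed containment up to the finitely many places ramified in the relevant extension, which is harmless for the intended density applications. The implication $L\subseteq M \Rightarrow \supseteq$ is then elementary: if $\beta$ is a degree-$1$ place of $M$ over an unramified prime $p$ and $L\subseteq M$, then its restriction $\mathfrak{q}=\beta|_L$ has residue degree dividing $f(\beta/p)=1$, hence $f(\mathfrak{q}/p)=1$; because $L/K$ is Galois all places of $L$ over $p$ are conjugate and share this residue degree, so $p$ splits completely in $L$. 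This step uses only multiplicativity of residue degrees in towers and transitivity of the Galois action on places.

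For the converse I would fix a finite Galois extension $N/K$ containing both $L$ and the Galois closure $\widetilde{M}$ of $M$, and write $G=\Gal(N/K)$, $H_L=\Gal(N/L)\normal G$, and $H_M=\Gal(N/M)$. For an unramified prime $p$, fix a place of $N$ above it and let $\sigma=\mathrm{Frob}_p\in G$ be the corresponding Frobenius element. The standard dictionary gives two facts: $p$ splits completely in $L$ exactly when $\sigma\in H_L$ (a condition independent of the chosen place since $H_L$ is normal), and $p$ lies under a degree-$1$ place of $M$ exactly when some conjugate of $\sigma$ lies in $H_M$. The latter holds because the places of $M$ over $p$ correspond to the $\langle\sigma\rangle$-orbits on $G/H_M$ with residue degrees equal to the orbit sizes, and a coset $gH_M$ is a fixed point precisely when $g^{-1}\sigma g\in H_M$.

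With this dictionary the claimed containment becomes, up to finitely many primes, the assertion that every element of $G$ having a conjugate in $H_M$ already lies in $H_L$. The crucial input is the Chebotarev density theorem, which realizes \emph{every} conjugacy class of $G$ as $\mathrm{Frob}_p$ for a positive-density set of $p$; this is precisely what guarantees that no Frobenius class is omitted, so that the set-theoretic containment of places is genuinely equivalent to the group-theoretic containment. I expect this to be the one nonelementary ingredient and hence the main obstacle, as the reverse implication truly depends on the existence of primes in each Frobenius class. The remaining group theory is routine: applying the condition to $\sigma\in H_M$ forces $H_M\subseteq H_L$, and conversely $H_M\subseteq H_L$ together with the normality of $H_L$ forces every element conjugate into $H_M$ to lie in $H_L$. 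Finally the Galois correspondence turns $H_M\subseteq H_L$ into $L\subseteq M$ (fixed fields reverse inclusions), closing the equivalence.
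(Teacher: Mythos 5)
The paper does not prove this proposition at all; it is quoted verbatim as a known result of Bauer, with the proof deferred to the cited reference (Neukirch, \emph{Algebraic Number Theory}, VII.13.9). Your argument is correct and is essentially the standard proof given there: pass to a common Galois overfield $N\supseteq L\widetilde{M}$, translate ``split in $L$'' into $\mathrm{Frob}_p\in\Gal(N/L)$ and ``has a degree-$1$ factor in $M$'' into ``some conjugate of $\mathrm{Frob}_p$ lies in $\Gal(N/M)$,'' and use Chebotarev to see that the containment of prime sets is equivalent to the group-theoretic containment $\Gal(N/M)\subseteq\Gal(N/L)$, which the Galois correspondence converts to $L\subseteq M$. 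Your caveat about reading the containment up to the finitely many ramified places matches the convention in the source and is harmless for the paper's application.
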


Suppose $\Sigma = (\Sigma_p)$ has $\Sigma_p = \{K_p^{\oplus n}\}$ for all $p$ split in some finite extension $M/K$. Then Bauer's result implies
\[
\{F\in \mathcal{F}(K,G) : \forall p,\ F\otimes K_p\in \Sigma_p,\ {\rm Ht}(F)\le X\} \subseteq \{F/K : \widetilde{F}\subseteq M\},
\]
which is necessarily a finite set. With the numerator of (\ref{eq:proportion}) finite, the limit necessarily exists and whether it is zero or not depends on whether there are infinitely many $G$-extensions of $K$ or not. Theorem \ref{thm:main} is best understood as a generalization of results like Bauer's, following from the Chebotarev Density Theorem.

\subsection{Accumulating Subfields}

The presence of accumulating subfields in Theorem \ref{thm:main} is worthy of recognition, as this concept plays an important role in number field counting.

Malle predicted that
\[
\#\{F\in \mathcal{F}(K,G) : |\disc(F/K)|\le X\} \sim c(K,G) X^{1/a(G)}(\log X)^{b(K,G)-1}.
\]
as $X$ tends to infinity \cite{malle2002,malle2004}. Malle gives $a(G)$ and $b(K,G)$ explicitly in terms of the index function $\ind(g) = n = \#\{\text{orbits of }g\}$. This is known to hold when $G$ is abelian or $G=S_n$ with $n=3,4,5$, and is an important step for studying (\ref{eq:proportion}) in these cases. However, it is notable that Malle's conjecture is wrong for some groups and base fields. Kl\"uners famously demonstrated this by proving that $G=C_3\wr C_2\subseteq S_6$ with $K=\Q$ is a counter example \cite{kluners2005}. Malle predicted a growth rate $cX^{1/2}$, however Kl\"uners proved that
\[
\#\{F\in \mathcal{F}(\Q,C_3\wr C_2) : \widetilde{F}^{C_3\times C_3} = \Q(\zeta_3),\ |\disc(F/\Q)|\le X\} \gg X^{1/2}\log X.
\]
The field $\Q(\zeta_3)$ is an accumulating subfield.

There are more subtle examples where accumulating subfields are known to cause issues with heuristic predictions. The leading constant $c(K,G)$ in Malle's prediction is more mysterious than $a(G)$ and $b(K,G)$. In the case that $G=S_n$, Bhargava \cite{bhargava2007} predicted that $c(K,S_n)$ is a particular convergent Euler product. Bhargava's expression can be generalized to other groups $G$, but it is known not to be equal to $c(K,G)$ in some cases with accumulating subfields. Examples include
\begin{itemize}
\item $G=D_4\subseteq S_4$ \cite{cohen-diaz-y-diaz-olivier2002} and $G=H_9\subseteq S_9$ \cite{fouvry-koymans2021} for which $c(\Q,G)$ is explicitely expressed as a convergent sum of special values of $L$-functions, where only the first summand agrees with Bhargava's construction.
\item The cases $G= C_2\wr H$ for certain groups $H$ \cite{kluners2012} and $G = S_n\times A$ for $n=3,4,5$ and $A$ abelian \cite{jwang2021,masri-thorne-tsai-jwang2020}. The authors for these cases do not give an explicit formula for the leading constant, but they do remark that their methods imply that leading constant is given by a sum of Euler products, only one of which agrees with Bhargava's construction.
\end{itemize}
This is not an exhaustive list. At the time of this writing, all known counting results for cases with accumulating subfields are known to either disagree with Malle's predicted growth rate or have a leading coefficient that disagrees with Bhargava's construction.

While accumulating subfields have been known to thwart existing heuristics, previous examples have only been proven in cases which we know more about the growth rate of $\#\{F\in \mathcal{F}(K,G) : |\disc(F/K)|\le X\}$. We will detail the existing heuristics for (\ref{eq:proportion}), which predict the limit will be zero. Theorem \ref{thm:main} does not depend on knowledge of this growth rate, showing that \emph{any} accumulating subfield can violate these heuristics.

\section{Heuristic Predictions}

The Malle--Bhargava principle \cite{bhargava2007,wood2019} generalizes Malle's predictions for counting $G$-extensions by proposing that the generating series
\[
\sum_{\substack{F\in \mathcal{F}(K,G)\\\forall p,\ F\otimes K_p\in \Sigma_p}} |\disc(F/K)|^{-s}
\]
should be arithmetically equivalent to the Euler product
\[
\prod_p \left(\frac{1}{|G|} \sum_{\substack{f_p\in \Hom(G_{K_p},G)\\ F_p\in \Sigma_p}} |\disc(F_p/K_p)|^{-s}\right),
\]
where $F_p/K_p$ is the $G$-\'etale algebra corresponding to $f_p\in \Hom(G_{K_p},G)$. Arithmetically equivalent here means that both series have their rightmost pole at the same place of the same order. Malle's prediction is reproduced via a Tauberian theorem on the Euler product.

The constant terms of each Euler factor is determined by the unramified \'etale algebras in $\Sigma_p$. For all finite places, there are exactly $|G|$ possible unramified homomorphisms $f_p:G_{K_p}/I_p \to G$ corresponding to unramified $G$-\'etale alebras, depending on the image of Frobenius. If $\Sigma_p$ contains all unramified $G$-\'etale algebras then the constant term is $1$, otherwise the constant term is $<1$. In the event that $\Sigma_p$ is nonadmissible, infinitely many $p$ have constant term $<1$. An Euler product of the form
\[
\prod_p \left(c_p + O(p^{-s})\right)
\]
with $c_p\le 1$ for all $p$ and $c_p< 1$ for infinitely many $p$ necessarily diverges to $0$ for all $s$ with positive real part. These cases are not typically considered, in part because it is not clear how one should interpret the divergence. Three possible interpretations come to mind immediately:
\begin{enumerate}
\item Divergence to $0$ predicts that there are zero such extensions.
\item The function $0$ has no poles, which suggests the generating Dirichlet series is holomorphic. A Tauberian theorem would then imply there are at most finitely many such extensions.
\item The Malle--Bhargava principle is only an expression of the ``main term". The divergence to $0$ suggests that $0\%$ of $G$-extensions satisfy these local conditions.
\end{enumerate}
There is some credence to predicting that zero such extensions exist. There are uncountably many ways to choose mutually disjoint nonadmissible $\Sigma$. For each place $p$ for which $\Sigma_p$ does not contain all $G$-\'etale algebras choose between $\Sigma_p$ and the complement $\Sigma_p^{c}$.  Given that there are only countable many number fields, comparing cardinalities implies that uncountably many of these local restrictions are satisfied by exactly zero number fields.

Theorem \ref{thm:main} suggests that the third interpretation applies best to \emph{all} families $\Sigma$, instead of just most families, as long as there are no accumulating subfields. That the Malle--Bhargava principle is only a prediction for the ``main term" can be seen in other settings as well, such as for counting cubic extensions. The number of $S_3$-cubic extensions of $\Q$ with bounded discriminant admits a secondary term of order $X^{5/6}$ \cite{bhargava-shankar-tsimerman2012,taniguchi-thorne2013}, however the corresponding Euler product has no pole at $s=5/6$. This example shows that, at best, the Malle--Bhargava principle can make predictions only for the main term.

\section{The Proof}

We call a sequence of field extensions $F_1,F_2,...,F_m$ containing $K$ independent over $K$ if
\[
\left(\prod_{i\ne j} \widetilde{F}_i\right) \cap \widetilde{F}_j = K
\]
for each $j=1,...,m$. These are precisely the sequences of fields for which
\[
\Gal(\widetilde{F}_1\widetilde{F}_2\cdots \widetilde{F}_m/K) \cong \prod_{i=1}^m \Gal(\widetilde{F}_i/K).
\]

\begin{lemma}\label{lem:independence}
Let $K$ be a number field, $G\subset S_n$ a transitive group, and $\Sigma = (\Sigma_p)$ a family of local conditions with $\delta_{\rm NA}(\Sigma) > 0$. Let $F_1,...,F_m \in \mathcal{F}(K,G)$ be independent over $K$ such that for all places $p$ and each $i=1,2,...,n$, $F_i\otimes K_p\in \Sigma_p$.  Then there exists a constant $C_{|G|,\delta_{\rm NA}(\Sigma)}$ depending only on $|G|$ and $\delta_{\rm NA}(\Sigma)$ such that $m\le C_{|G|,\delta_{\rm NA}(\Sigma)}$.
\end{lemma}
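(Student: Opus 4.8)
The plan is to translate the hypotheses into a statement about Frobenius conjugacy classes in the product group $G^m$ and then apply the Chebotarev Density Theorem. First I would record the structural input from independence: the compositum $L = \widetilde{F}_1\cdots \widetilde{F}_m$ has $\Gal(L/K)\cong G^m$, so for each place $p$ unramified in $L$ the Frobenius is a conjugacy class in $G^m$ whose $i$-th coordinate is exactly the Frobenius class of $p$ in $\Gal(\widetilde{F}_i/K)\cong G$, and hence determines the unramified $G$-\'etale algebra $F_i\otimes K_p$.

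Next I would unpack the meaning of $\delta_{\rm NA}(\Sigma)>0$ in terms of conjugacy classes. The unramified $G$-\'etale algebras over $K_p$ are in bijection with the conjugacy classes of $G$ via the image of Frobenius, so for each conjugacy class $c$ I set $A_c$ to be the set of places $p$ for which the unramified algebra with Frobenius class $c$ fails to lie in $\Sigma_p$. By definition the set of all nonadmissible places is the finite union $A=\bigcup_c A_c$ over the at most $|G|$ conjugacy classes, and writing $\underline{d}$ and $\overline{d}$ for lower and upper density we have $\underline{d}(A)=\delta_{\rm NA}(\Sigma)=:\delta$.

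The key observation is then a forced avoidance: if $p\in A_c$ is unramified in $L$, then since each $F_i\otimes K_p\in\Sigma_p$ while the algebra attached to $c$ is excluded, the $i$-th Frobenius coordinate cannot equal $c$ for any $i$, so the Frobenius tuple of $p$ in $G^m$ avoids $c$ in every coordinate. By the Chebotarev Density Theorem applied to $L/K$ the density of places whose Frobenius tuple avoids $c$ in each coordinate is exactly $\left(1-|c|/|G|\right)^m\le\left(1-1/|G|\right)^m$, since the number of elements of $G^m$ avoiding a fixed conjugacy class $c$ in every coordinate is $(|G|-|c|)^m$. As every unramified $p\in A_c$ is such a place and the finitely many places ramified in $L$ are negligible, we get $\overline{d}(A_c)\le\left(1-1/|G|\right)^m$.

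Finally I would combine these through the union bound. Summing over the at most $|G|$ conjugacy classes gives
\[
\delta_{\rm NA}(\Sigma)=\underline{d}(A)\le \sum_c \overline{d}(A_c)\le |G|\left(1-\frac{1}{|G|}\right)^m,
\]
and solving for $m$ yields $m\le \dfrac{\log(|G|/\delta)}{\log\frac{|G|}{|G|-1}}$, a bound depending only on $|G|$ and $\delta_{\rm NA}(\Sigma)$. The one point demanding care is the density bookkeeping: the nonadmissibility density is a $\liminf$, so I must route the argument through the upper densities $\overline{d}(A_c)$, each controlled by a genuine Chebotarev limit, and use finiteness of the index set to push the union bound through, rather than naively adding $\liminf$s. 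I expect this bookkeeping, together with the verification that the local constraint at an unramified $p$ is precisely a Frobenius-class avoidance condition, to be the main obstacle; the group-theoretic count and the Chebotarev input itself are routine.
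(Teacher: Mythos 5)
Your proposal is correct and takes essentially the same route as the paper: both pass to the compositum with Galois group $G^m$, apply Chebotarev, and compare the proportion $(1-|c|/|G|)^m$ of tuples avoiding a conjugacy class $c$ in every coordinate against the positive density of nonadmissible places. The only cosmetic differences are that you work with the complementary sets $A_c$ and a union bound where the paper uses a pigeonhole selection of a single class $c$, which incidentally yields your explicit bound $m\le \log(|G|/\delta)/\log\bigl(|G|/(|G|-1)\bigr)$.
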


\begin{proof}
For each conjugacy class $c\subseteq G$, set
\[
A_c = \{p : \text{ the unramified \'etale algebra }F_p/K_p\text{ with }\Fr_p(F_p/K_p) \in c\text{ is in }\Sigma_p\}.
\]

We will bound the density of one of these sets using the pigeonhole principle. Certainly the upper density
\[
\delta^+(A_c) := \limsup_{x\to \infty} \frac{\#\{p\in A_c : p\le x\}}{\pi_K(x)} \le 1.
\]
Let $\kappa(G)$ be the number of conjugacy classes in $G$. We can bound
\begin{align*}
\sum_{c} \delta^+(A_c) &= \limsup_{x\to \infty} \frac{1}{\#\pi_K(x)} \sum_{p\le x} |\Sigma_p|\\
&\le \kappa(G) - \liminf_{x\to \infty} \frac{1}{\#\pi_K(x)} \sum_{p\le x} \left(\kappa(G) - |\Sigma_p|\right)\\
&\le \kappa(G) - \liminf_{x\to \infty} \frac{1}{\#\pi_K(x)} \sum_{\substack{p\le x\\\Sigma_p\text{ is not everything}}} 1\\
&= \kappa(G) - \delta_{\rm NA}(\Sigma).
\end{align*}
In particular,
\begin{align*}
\delta_{\rm NA}(\Sigma)\le \sum_{c} (1-\delta^+(A_c))
\end{align*}
is a sum of nonegative numbers. By the pigeonhole principle, at least one of these is larger than $\frac{\delta_{\rm NA}(\Sigma)}{\kappa(G)}$, so that we conclude that there exists a conjugacy class $c$ for which
\[
\delta^+(A_c) \le 1 - \frac{\delta_{\rm NA}(\Sigma)}{\kappa(G)}.
\]

The compositum $M=F_1F_2\cdots F_m$ is necessarily an extension of degree $n^m$ whose Galois closure has Galois group $G^m$ by independence. Moreover, any prime for which $\rho_i(\Fr_p(M/K))\in c$ for at least one projection map $\rho_i:G^m\to G$ necessarily belongs to $A_c$, as each $F_i$ satisfies the local conditions of $\Sigma$. Thus, we compare
\[
\{p : \rho_i(\Fr_p(M/K)) \in c\text{ for at least one }i\} \subseteq A_c.
\]
By the Chebotarev density theorem, we can then compare the (upper) densities
\[
\frac{\#\{(g_i)\in G^m : g_i\in c\text{ for at least one }i\}}{|G|^m} \le 1 - \frac{\delta_{\rm NA}(\Sigma)}{\kappa(G)}.
\]
The lefthand side is given by
\begin{align*}
\frac{\#\{(g_i)\in G^m : g_i\in c\text{ for at least one }i\}}{|G|^m} &= 1 - \frac{\#\{(g_i)\in G^m : g_i\not\in c\text{ for all }i\}}{|G|^m}\\
&= 1 - \frac{(|G| - |c|)^m}{|G|^m}\\
&= 1 - \left(1 - \frac{|c|}{|G|}\right)^m.
\end{align*}
In particular, this implies the lefthand side is an increasing function with
\[
\lim_{m\to \infty}\frac{\#\{(g_i)\in G^m : g_i\in c\text{ for at least one }i\}}{|G|^m} = 1 > 1 - \frac{\delta_{\rm NA}(\Sigma)}{\kappa(G)}
\]
following from $\delta_{\rm NA}(\Sigma) > 0$. Thus, $m$ is bounded in terms of $|c|$, $\kappa(G)$, and $\delta_{\rm NA}(\Sigma)$. Given that there are at most $|G|$ possibilities for $|c|$ and $\kappa(G)$, the dependence on $|c|$ and $\kappa(G)$ can be dropped in exchange for a dependence on $|G|$.
\end{proof}

We now prove Theorem \ref{thm:main}. Suppose first that ${\rm Ht}$ admits no accumulating subfields. Let $F_1,F_2,...,F_m$ be a maximal set of independent $G$-extensions satisfying the local conditions in $\Sigma$, which exists by Lemma \ref{lem:independence}. Then any other $G$-extension $F$ satisfying the local conditions in $\Sigma$ has
\[
\left(\prod_{i=1}^m \widetilde{F}_i\right) \cap \widetilde{F} \ne K.
\]
In particular, the number of possible fields $F$ is bounded by
\[
\sum_{\substack{H\normal G\\H\ne G}}\sum_{K\ne L\le \prod_i \widetilde{F}_i} \#\{F\in \mathcal{F}(K,G) : \widetilde{F}^H = L,\ {\rm Ht}(F) \le X\}.
\]
The sums are finite. As ${\rm Ht}$ admits no accumulating subfields, the result follows.

Conversely, suppose ${\rm Ht}$ has an accumulating subfield given by the proper subgroup $H\normal G$ and a $G/H$ extension $L/K$. Take $\Sigma = (\Sigma_p)$ such that
\[
\Sigma_p = \{F_p/K_p : L\otimes K_p \subseteq \widetilde{F}_p\}.
\]
Any prime which is not split in $L$ necessarily has $K_p^{\oplus n} \not\in \Sigma_p$, so by $H\ne G$ and Chebotarev density we must have $\delta_{\rm NA}(\Sigma) > 0$. However, it is certainly the case that
\[
\{F\in \mathcal{F}(K,G) : \forall p, F\otimes K_p\in \Sigma_p,\ {\rm Ht}(F) \le X\} \supseteq \{F\in \mathcal{F}(K,G) : \widetilde{F}^H = L,\ {\rm Ht}(F) \le X\}.
\]
Given that $L$ is an accumulating subfield, this implies
\begin{align*}
&\limsup_{x\to\infty} \frac{\#\{F\in \mathcal{F}(K,G) : \forall p, F\otimes K_p\in \Sigma_p,\ {\rm Ht}(F) \le X\}}{\#\{F\in \mathcal{F}(K,G) : {\rm Ht}(F) \le X\}}\\
&\ge \limsup_{x\to\infty} \frac{\#\{F\in \mathcal{F}(K,G) : \widetilde{F}^H = L,\ {\rm Ht}(F) \le X\}}{\#\{F\in \mathcal{F}(K,G) : {\rm Ht}(F) \le X\}}\\
&> 0.
\end{align*}

\bibliographystyle{alpha}
\bibliography{Restricting_Positive_Density_of_Splitting_Types.bbl}
\end{document}